\newcommand{\RR}{\mathbb{R}}
\newcommand{\VV}{\mathcal{V}}
\newcommand{\vv}{\boldsymbol{v}}
\newcommand{\xx}{\boldsymbol{x}}
\renewcommand{\aa}{\boldsymbol{a}}
\newcommand{\bb}{\boldsymbol{b}}
\newcommand{\cc}{\boldsymbol{c}}
\newcommand{\aalpha}{\boldsymbol{\alpha}}
\newcommand{\1}{\mathbbm{1}}
\newcommand{\red}{\textcolor{red}}
\begin{document}

\title{Slack Ideals in Macaulay2}

\author{Antonio Macchia\thanks{Supported by the Einstein Foundation Berlin under Francisco Santos grant EVF-2015-230.}\inst{1}\orcidID{0000-0001-9680-0560} \and
Amy Wiebe\inst{1}}

\authorrunning{A. Macchia and A. Wiebe}

\institute{Fachbereich Mathematik und Informatik, Freie Universit\"at Berlin,\\ Arnimallee 2, 14195 Berlin, Germany\\
\email{macchia.antonello@gmail.com, w.amy.math@gmail.com}}

\maketitle              

\begin{abstract}
Recently Gouveia, Thomas and the authors introduced the slack realization space, a new model for the realization space of a polytope. It represents each polytope by its slack matrix, the matrix obtained by evaluating each facet inequality at each vertex. Unlike the classical model, the slack model naturally mods out projective transformations. It is inherently algebraic, arising as the positive part of a variety of a saturated determinantal ideal, and provides a new computational tool to study classical realizability problems for polytopes.
We introduce the package \texttt{SlackIdeals} for \textit{Macaulay2}, that provides methods for creating and manipulating slack matrices and slack ideals of convex polytopes and matroids. Slack ideals are often difficult to compute. To improve the power of the slack model, we develop two strategies to simplify computations: we scale as many entries of the slack matrix as possible to one; we then obtain a reduced slack model combining the slack variety with the more compact Grassmannian realization space model. This allows us to study slack ideals that were previously out of computational reach.

As applications, we show that the well-known Perles polytope does not admit rational realizations and prove the non-realizability of a large quasi-simplicial sphere.

\keywords{Polytopes \and Slack matrices \and Slack ideals \and Matroids.}
\end{abstract}

\section{Introduction}

Slack matrices of polytopes are nonnegative real matrices whose entries express the slack of a vertex in a facet inequality. In particular, the zero pattern of a slack matrix encodes the vertex-facet incidence structure of the polytope. Slack matrices have found remarkable use in the theory of extended formulations of polytopes: Yannakakis \cite{Y91} proved that the extension complexity of a polytope is equal to the nonnegative rank of its slack matrix.

More generally, one can define the slack matrix of a matroid by computing the slacks of the ground set vectors in the hyperplanes of the matroid.

If $P$ is $d$-dimensional polytope, replacing all positive entries in the slack matrix with distinct variables, one obtains a new sparse generic matrix $S_P(\xx)$, called the \textit{symbolic slack matrix} of $P$. Then we define the \textit{slack ideal} $I_P$ of $P$ as the ideal of all $(d+2)$-minors of $S_P(\xx)$, saturated with respect to the product of all variables in $S_P(\xx)$.

Slack ideals were introduced for polytopes in \cite{GPRT17}, where it was also noted that they could be used to model the realization space of a polytope. The details of this realization space model and further properties of the slack ideal were studied in \cite{GMTWfirstpaper}, \cite{GMTWsecondpaper} and \cite{GMWthirdpaper}. An analogous realization space model for matroids was introduced in \cite{BW19}.

In this paper, we describe the \texttt{Macaulay2} \cite{M2} package \texttt{SlackIdeals.m2}, that is available at \url{https://bitbucket.org/macchia/slackideals/src/master/SlackIdeals.m2}. It provides methods to define and manipulate slack matrices of polytopes, matroids, polyhedra, and cones; obtain a slack matrix directly from the Gale transform of a polytope; compute the symbolic slack matrix and the slack ideal from a slack matrix; compute the graphic ideal of a polytope, the cycle ideal and the universal ideal of a matroid.

Slack ideal computations are often out of computational reach. Therefore we develop two techniques to speed up and simplify computations. First, we suitably set to one as many entries of the slack matrix as possible. One can compute the slack ideal of this dehomogenized slack matrix and then rehomogenize the resulting ideal (see Proposition \ref{PROP:rehomIdeal}). The new ideal coincides with the original slack ideal if the latter is radical. Second, we obtain a reduced slack matrix by keeping the columns of a set of facets $F$ that contains a flag (a maximal chain in the face lattice of P) and such that the facets not in $F$ are simplicial. Combining these two strategies, we have a powerful tool for the study of hard realizability questions. As applications, we show that the well-known Perles polytope does not admit rational realizations and prove the non-realizability of a large quasi-simplicial sphere.

\section{Slack matrices and slack ideals}

Given a collection of points $V = \{\vv_1,\ldots, \vv_n\}\subset\RR^d$ and a collection of (affine) hyperplanes $H = \{\{\xx\in\RR^d: b_i-\aalpha_i^\top\xx = 0\} : i=1\ldots f\}$  we can define a \textit{slack matrix of the pair} $(V,H)$ by
\[
S_{(V,H)} = \begin{bmatrix} \1 & \vv_1 \\ \vdots & \vdots \\ \1 & \vv_n \end{bmatrix} \begin{bmatrix} b_1 & \cdots & b_f \\ \aalpha_1 & \cdots & \aalpha_f \end{bmatrix} \in \RR^{n\times f}.
\]

If $P$ is a $d$-polytope, we take $V = \textup{vert}(P)$ and $H$ to be the set of facet defining hyperplanes. Then $S_P = S_{(V,H)}$. When coordinates $V$ are given for the vectors of a matroid~$M$, they are always assumed to be an affine configuration which gets homogenized to form the matroid; in particular, this means that if $V = \textup{vert}(P)$, then the associated matroid is the matroid of the polytope $P$. The hyperplanes are taken to be all hyperplanes of $M$, and then $S_M = S_{(V,H)}$.

{\renewcommand{\baselinestretch}{0.9}
\begin{verbatim}
i1 : needsPackage "SlackIdeals";
i2 : V = {{0,0},{0,1},{1,1},{1,0}};
-- Compute the slack matrix of P=conv(V)
i3 : slackMatrix(V)
o3 = | 0 1 0 1 |
     | 1 0 0 1 |
     | 0 1 1 0 |
     | 1 0 1 0 |
-- Compute the slack matrix of matroid of V
i4 : slackMatrix(V, Object=>"matroid")
o4 = | -1 -1 0 -1 0  0  |
     | -1 0  1 0  1  0  |
     | 0  1  1 0  0  -1 |
     | 0  0  0 -1 -1 -1 |
\end{verbatim}
}
The \texttt{slackMatrix} command also takes a pre-computed matroid, polyhedron or cone object as input.

Another way to compute the slack matrix of a polytope is from its Gale transform using the command \texttt{slackFromGaleCircuits}. Let $G$ be a matrix with real entries whose columns are the vectors of a Gale transform of a polytope $P$. A slack matrix of $P$ is computed by finding the minimal positive circuits of $G$, see \cite[Section 5.4]{G03}.
Alternatively, the command \texttt{slackFromGalePlucker} applies the maps of \cite[Section~5]{GMWthirdpaper} to fill a slack matrix with Pl\"ucker coordinates of the Gale transform.

The slack matrices of a few specific polytopes and matroids of theoretical importance are built-in, using the command \texttt{specificSlackMatrix}.

\medskip
The \textit{symbolic slack matrix} can be obtained by replacing the nonzero entries of a slack matrix by distinct variables; that is,
\[
[S_{(V,H)}(\xx)]_{i,j} = \begin{cases} 0 & \text{ if } \vv_i\in H_j \\ x_{i,j} & \text{ if } \vv_i\notin H_j \end{cases}.
\]

From this sparse generic matrix we obtain the \textit{slack ideal} as the saturation of the ideal of its $(d+2)$-minors by the product of all variables in $S_{(V,H)}(\xx)$:
\[
I_{(V,H)} = \langle (d+2)-\textup{minors of } S_{(V,H)}(\xx)\rangle : \left(\prod_{j=1}^f\prod_{i : \vv_i\notin H_j} x_{i,j}\right)^\infty.
\]

Given a (symbolic) slack matrix of a $d$-polytope, $(d+1)$-dimensional cone, or rank $d+1$ matroid, we can compute the associated slack ideal, specifying $d$ as an input. Unless we pass variable names as an option, the function labels the variables consecutively by rows with a single index starting from $1$:

{\renewcommand{\baselinestretch}{0.9}
\begin{verbatim}
-- Compute slack ideal of d-polytope P=conv(V)
i10 : V = {{0,0},{0,1},{1,1},{1,0}};
i11 : slackIdeal(2, slackMatrix(V)) -- here d=2
o11 = ideal(x x x x  - x x x x )
             0 3 5 6    1 2 4 7
\end{verbatim}
}

We get the same result if we compute \texttt{slackIdeal(2,V)}, giving only the list of vertices of a $d$-polytope or ground set vectors of a matroid instead of a slack matrix. {We also get the same result with \texttt{slackIdeal(V)}, but the computation is faster if you provide $d$ as an argument.}
As optional argument, one can choose the object to be set as \texttt{"polytope"}, \texttt{"cone"}, or \texttt{"matroid"} (default is \texttt{Object=>"polytope"}).

To a polytope or matroid we can also associate a specific toric ideal, known as the {\em graphic} or {\em cycle ideal}, respectively. These ideals are important in the classification of certain projectively unique polytopes \cite{GMTWsecondpaper} and matroids \cite{BW19}, and can be computed using the commands \texttt{graphicIdeal} and \texttt{cycleIdeal}.

In \cite[Section 4]{GMWthirdpaper} it is shown that a slack matrix can be filled with Pl\"ucker coordinates of a matrix formed from the vertex coordinates of a polytope (or extreme ray generators of a cone or ground set vectors of a matroid). This idea is the basis for the reduction technique described in \cite[Section 6] {GMWthirdpaper} and Section~\ref{S.Reduction}. The Grassmannian section ideal of a polytope is also defined and shown to cut out exactly  a set of representatives of the slack variety that are constructed in this way \cite[Section 4.1]{GMWthirdpaper}.
The command \texttt{grassmannSectionIdeal} computes this section ideal given a set of vertices of a polytope and the indices of vertices that span each facet.

\section{On the dehomogenization of the slack ideal} 

Let $P$ be a polytope and $S_P$ its slack matrix. We define the \textit{non-incidence graph} $G_P$ as the bipartite graph whose vertices are the vertices and facets of $P$, and whose edges are the vertex-facet pairs of $P$ such that the vertex is not on the facet. This graphic structure provides a systematic way to scale a maximal number of entries in $S_P$ to~$1$, as spelled out in \cite[Lemma~5.2]{GMTWsecondpaper}. In particular, we may scale the rows and columns of $S_P(\xx)$ so that it has ones in the entries indexed by the edges in a maximal spanning forest of the graph $G_P$. This can be done using \texttt{setOnesForest}, which outputs a sequence $(Y,F)$ where $Y$ is the scaled symbolic slack matrix and $F$ is the spanning forest used to scale $Y$.

{\renewcommand{\baselinestretch}{0.9}
\begin{verbatim}
i23 : V = {{0,0,0},{1,0,0},{0,1,0},{0,0,1},{1,0,1},{1,1,0}};
i24 : (Y, F) = setOnesForest(symbolicSlackMatrix(V)); Y
o24 = | 0 1 0   0    1 |
      | 1 0 0   0    1 |
      | 0 1 1   0    0 |
      | 1 0 x_7 0    0 |
      | 0 1 0   1    0 |
      | 1 0 0   x_11 0 |
\end{verbatim}}

This leads to a dehomogenized version of the slack ideal defined as follows. Given $S_P$ and a maximal spanning forest $F$ of $G_P$, let $S_P(\xx^F)$ be the symbolic slack matrix of $P$ with all the variables corresponding to edges in~$F$ set to $1$. Then the dehomogenized ideal, $I_P^F$, is the slack ideal of this scaled slack matrix:
$$I_P^F := \langle (d+2)-\textup{minors of }S_P(\xx^F)\rangle : \left(\prod \xx^F\right)^\infty.$$

It is natural to ask what is the relation between $I_P^F$ and the original slack ideal $I_P$. In particular, we might wish to know if we can recover the full slack ideal from $I_P^F$. From \cite[Lemma 5.2]{GMTWsecondpaper} we know that any slack matrix in $\mathcal V(I_P)$ (or, in fact, any point in the slack variety with all coordinates that correspond to $F$ being nonzero) can be scaled to a matrix in $\mathcal V(I^F_P)$. Conversely, it is clear that any point in $\mathcal V(I^F_P)$ can be thought of as a point in $\mathcal V(I_P)$. Thus, in terms of the varieties we have $\VV(I_P)^*/(\RR^v\times\RR^f) \cong \VV(I_P^F)^*,$ where $\VV(I)^*$ denotes the part of the variety where all coordinates are nonzero.

To see the algebraic implications of this, let us introduce the following rehomogenization process.
Notice that in the proof of \cite[Lemma 5.2]{GMTWsecondpaper}, we dehomogenize by following the edges of forest $F$ starting from some chosen root(s) and moving toward the leaves. The destination vertex of each edge tells us which row or column to scale, and the edge label is the variable by which we scale.
Now, given a polynomial in $I_P^F$, using the same forest and orientation we proceed in the reverse order: starting at the leaves, for each edge of the forest, we reintroduce the variable corresponding to it in order to rehomogenize the polynomial with respect to the row or column corresponding to the destination vertex of that edge.

\begin{example} Consider the slack matrix $S_P(\xx^F)$ of the triangular prism $P$ scaled
\noindent\begin{minipage}[c]{0.65\textwidth} according to forest $F$, pictured in Figure \ref{F:forest}. Then $I_P^F = \langle x_7-1,x_{11}-1\rangle$. So we can rehomogenize, for example, the element $x_7-x_{11}$ with respect to forest~$F$ as follows.\\ \indent First, consider the leaf corresponding to column 3. \end{minipage} \hspace{0.02\textwidth}
{\renewcommand{\arraystretch}{0.8}\begin{minipage}[c]{0.29\textwidth} \vspace{-10pt}
\[
S_P(\xx^F) \!=\! \begin{bmatrix} 0 & 1 & 0 & 0 & 1 \\ 1 & 0 & 0 & 0 & 1 \\ 0 & 1 & 1 & 0 & 0 \\ 1 & 0 & x_7 & 0 & 0 \\ 0 & 1 & 0 & 1 & 0 \\ 1 & 0 & 0 & x_{11} & 0 \end{bmatrix}
\]
\end{minipage}}\\[0.1cm]
Its edge is labeled with $x_5$, so we reintroduce that variable to the monomial $x_{11}$ since its degree in column 3 is currently $0$, while the degree of $x_7$ in that column is $1$. We continue this process until all the edges of $F$ have been used.
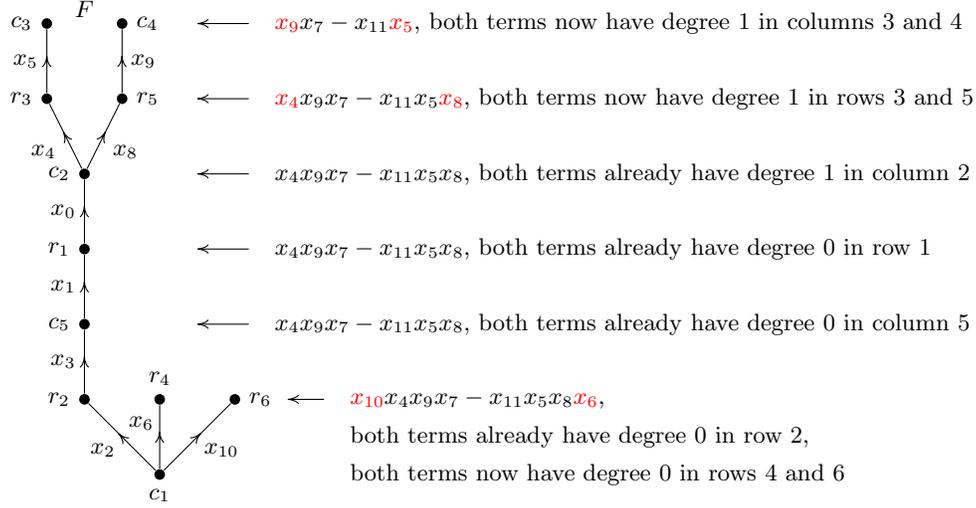
\begin{figure}
\scalebox{1}{\hspace{-15pt}
\begin{tikzpicture}
\newcommand{\midarrow}{\tikz \draw[-{>[length=1.5mm, width=1.5mm,angle'=45,open]}] (0,0) -- +(0.1,0);}
\draw (-1,0.2) node[] {$F$};
\draw (0,-6) node[circle, fill, inner sep = 0pt, minimum size = 4pt, label = {below:$c_1$}] (c1) {};
\draw (-1,-5) node[circle, fill, inner sep = 0pt, minimum size = 4pt, label = {left:$r_2$}] (r2) {};
\draw (0,-5) node[circle, fill, inner sep = 0pt, minimum size = 4pt, label = {above:$r_4$}] (r4) {};
\draw (1,-5) node[circle, fill, inner sep = 0pt, minimum size = 4pt, label = {right:$r_6$}] (r6) {};
\draw (-1,-4) node[circle, fill, inner sep = 0pt, minimum size = 4pt, label = {left:$c_5$}] (c5) {};
\draw (-1,-3) node[circle, fill, inner sep = 0pt, minimum size = 4pt, label = {left:$r_1$}] (r1) {};
\draw (-1,-2) node[circle, fill, inner sep = 0pt, minimum size = 4pt, label = {left:$c_2$}] (c2) {};
\draw (-1.5,-1) node[circle, fill, inner sep = 0pt, minimum size = 4pt, label = {left:$r_3$}] (r3) {};
\draw (-0.5,-1) node[circle, fill, inner sep = 0pt, minimum size = 4pt, label = {right:$r_5$}] (r5) {};
\draw (-1.5,0) node[circle, fill, inner sep = 0pt, minimum size = 4pt, label = {left:$c_3$}] (c3) {};
\draw (-0.5,0) node[circle, fill, inner sep = 0pt, minimum size = 4pt, label = {right:$c_4$}] (c4) {};
\draw (c1) -- node[sloped, label={[label distance=-6pt]below:$x_{2}$}]{\rotatebox{180}{\midarrow}}
(r2) --node[sloped,label={[label distance=-5pt]above:$x_{3}$}]{\midarrow}
(c5) --node[sloped,label={[label distance=-5pt]above:$x_{1}$}]{\midarrow}
(r1) --node[sloped,label={[label distance=-5pt]above:$x_{0}$}]{\midarrow}
(c2) --node[sloped,label={[label distance=-5pt]-110:$x_{4}$}]{\rotatebox{180}{\midarrow}}
(r3) --node[sloped,label={[label distance=-5pt]above:$x_{5}$}]{\midarrow} (c3);
\draw (c1) -- node[sloped,label={[label distance=-7pt]60:$x_{6}$}]{\midarrow} (r4);
\draw (c1) -- node[sloped,label={[label distance=-6pt]below:$x_{10}$}]{\midarrow} (r6);
\draw (c2) -- node[sloped,label={[label distance=-5pt]-70:$x_{8}$}]{\midarrow}
(r5) -- node[sloped,label={[label distance=-5pt]below:$x_{9}$}]{\midarrow} (c4);
\draw (1.3,0) node[label=right:{$\red{x_{9}}x_7-x_{11}\red{x_5}$, {\small both terms now have degree $1$ in columns 3 and 4}}] (p1) {};
\draw[-{>[length=1.5mm, width=1.5mm,angle'=45,open]}] (p1) -- (0.5,0);
\draw (1.3,-1) node[label=right:{$\red{x_4}x_{9}x_7-x_{11}x_5\red{x_8}$, {\small both terms now have degree $1$ in rows 3 and 5 }}] (p2) {};
\draw[-{>[length=1.5mm, width=1.5mm,angle'=45,open]}] (p2) -- (0.5,-1);
\draw (1.3,-2) node[label=right:{$x_4x_{9}x_7-x_{11}x_5x_8$, {\small both terms already have degree $1$ in column 2}}] (p3) {};
\draw[-{>[length=1.5mm, width=1.5mm,angle'=45,open]}] (p3) -- (0.5,-2);
\draw (1.3,-3) node[label=right:{$x_4x_{9}x_7-x_{11}x_5x_8$, {\small both terms already have degree $0$ in row 1}}] (p4) {};
\draw[-{>[length=1.5mm, width=1.5mm,angle'=45,open]}] (p4) -- (0.5,-3);
\draw (1.3,-4) node[label=right:{$x_4x_{9}x_7-x_{11}x_5x_8$, {\small both terms already have degree $0$ in column 5}}] (p5) {};
\draw[-{>[length=1.5mm, width=1.5mm,angle'=45,open]}] (p5) -- (0.5,-4);
\draw (2.3,-5) node[label=right:{$\red{x_{10}}x_4x_{9}x_7-x_{11}x_5x_8\red{x_6}$, }] (p6) {};
\draw (2.3,-5.5) node[label=right:{{\small both terms already have degree $0$ in row 2,}}] {};
\draw (2.3,-6) node[label=right:{{\small both terms now have degree $0$ in rows 4 and 6}}] {};
\draw[-{>[length=1.5mm, width=1.5mm,angle'=45,open]}] (p6) -- (1.7,-5);
\end{tikzpicture}
}
\caption{A spanning forest for the triangular prism} \label{F:forest}
\end{figure}
\end{example}

Call the resulting ideal $H({I}_P^F)$. By the tree structure, the rehomogenization process does indeed end with a polynomial that is homogeneous, as once we make it homogeneous for a row or column we never add variables in that row or column again. We now consider the effect of this rehomogenization on minors.

\begin{lemma}
Let $p$ be a minor of $S_P(\xx)$ and $p^F$ its dehomogenization by $F$. Then its rehomogenization $H(p^F)$ equals $p$ divided by the product of all variables in $F$ that divide $p$.
\label{LEM:minorhomog}
\end{lemma}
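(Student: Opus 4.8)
The plan is to prove the identity monomial by monomial; we may assume $p\neq 0$, since otherwise both sides vanish.

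First I would fix notation. If $p$ is the minor of $S_P(\xx)$ on rows $I$ and columns $J$, write $p=\sum_t \varepsilon_t M_t$ with signs $\varepsilon_t=\pm1$, where each surviving monomial is $M_t=\prod_{i\in I}x_{i,\sigma_t(i)}$ for a bijection $\sigma_t\colon I\to J$ whose graph is a matching in the non-incidence graph $G_P$. Thus every $M_t$ is a squarefree monomial of one and the same multidegree $\mu$ for the bigrading $\deg x_{i,j}=\mathbf e_i\oplus\mathbf e_j$, namely degree $1$ in each row of $I$ and each column of $J$ and $0$ elsewhere. For a monomial $M_t$ let $V_t$ be its variable set (a perfect matching between $I$ and $J$ in $G_P$) and $S_t=V_t\cap F$, so that its dehomogenization is $M_t^F=\prod(V_t\setminus S_t)$. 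Write $g=\prod\{x_e : e\in F,\ x_e\mid p\}$; since any such $e$ divides every $M_t$ we have $e\in S_t$ for all $t$, and the assertion $H(p^F)=p/g$ amounts to showing that $H$ turns the monomial $M_t^F$ into $M_t/g=\prod(V_t\setminus\{e\in F : x_e\mid p\})$. A preliminary observation is that two distinct $M_t, M_{t'}$ can never coincide after setting some forest variables to $1$ or after deleting some forest variables from each: the symmetric difference of the two matchings is a nonempty disjoint union of cycles, hence is not contained in the forest $F$. Consequently no two monomials ever merge or cancel during the rehomogenization, so $H$ genuinely acts monomial by monomial and it suffices to follow a single $M_t$.

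Next I would run the rehomogenization in its leaves-to-root order and maintain an explicit invariant. At each non-root vertex $v$ (a row or column) the process handles $v$'s parent edge $e$, compares the current degree of all monomials in $v$, and multiplies the deficient ones by $x_e$. I claim that after $e$ has been processed, the current form of $M_t$ equals $M_t^F\cdot\prod_{e'\in B_t}x_{e'}$ where, for every forest edge $e'$ processed so far, $e'\in B_t$ if and only if $e'\in S_t$ and $x_{e'}\nmid p$. For the inductive step one uses that by the processing order every child edge of $v$ has already been handled, and that the only forest edges incident to $v$ are $e$ and those child edges; together with the inductive form of $B_t$ this pins down the current $v$-degree of each monomial, hence the maximum $D_v$ over all monomials, hence exactly which monomials receive $x_e$. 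One finds that $D_v=\mu_v$ unless some forest edge at $v$ divides $p$ (in which case $D_v=\mu_v-1$ and every monomial already agrees in its $v$-degree, so nothing is added), and that otherwise $x_e$ is appended to the current $M_t$ precisely when $e\in S_t$ and $x_e\nmid p$, which is the invariant for $e$. Once all edges are processed we obtain $B_t=S_t\setminus\{e\in F : x_e\mid p\}$, so the final monomial is $\prod(V_t\setminus S_t)\cdot\prod(S_t\setminus\{e : x_e\mid p\})=\prod(V_t\setminus\{e\in F : x_e\mid p\})=M_t/g$; summing over $t$ yields $H(p^F)=p/g$.

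The main obstacle is the case analysis in the inductive step, which is delicate precisely because the process compares \emph{all} monomials at once through the global maximum $D_v$: one must rule out that $x_e$ gets re-introduced when $x_e$ does not (yet) divide $p$, even though the monomials may fail to agree in their $v$-degree at the moment $e$ is processed, some having already acquired a child-edge variable incident to $v$. The facts that make this go through are that at most one forest edge incident to $v$ can divide $p$ — two such would both be the unique $v$-incident variable of every monomial, hence equal — and that whenever $D_v=\mu_v=1$ no child edge of $v$ divides $p$, so the monomials of $v$-degree $0$ at that stage are exactly those whose $v$-incident variable is the parent edge $e$, which are the ones that must, and do, regain $x_e$.
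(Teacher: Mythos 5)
Your proof is correct and follows essentially the same route as the paper's: monomials of a minor are perfect matchings that stay distinct under dehomogenization (cycle argument), and a leaves-to-root pass through the forest re-introduces each edge variable to exactly the monomials in which it originally appeared, except when it divided the whole minor. The explicit invariant you carry through the induction is a careful formalization of the paper's more informal leaf-peeling argument.
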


\begin{proof}
Note that all monomials in a minor have degree precisely one on every relevant row and column. In fact they can be interpreted as perfect matchings on the subgraph of $G_P$ corresponding to the $(d+2) \times (d+2)$ submatrix being considered. Let $\xx^{\aa}$ and $\xx^{\bb}$ be two distinct monomials in the minor, then their dehomogenizations are also distinct. To see this, note that if we interpret $\aa$ and $\bb$ as matchings, a common dehomogenization would be a common submatching $\cc$ of both, with all the remaining edges being in $F$. But $\aa \setminus \cc$ and $\bb \setminus \cc$ would then be distinct matchings on the same set of variables, hence their union contains a cycle, so they would not be both contained in the forest $F$.

Now note that when rehomogenizing a minor, we start with all degrees being zero or one for every row and column, and since we visit each node (corresponding to each of the rows/columns) exactly once by the tree structure, the degree of every row and column is at most one after homogenizing.
In the first step of rehomogenizing, we start with a leaf of~$F$, which means the variable $x_i$ labeling its edge is the only variable in the row or column corresponding to that leaf which was set to~1. Thus if any monomial of the minor has degree zero on that row or column, it must be because $x_i$ occurred in that monomial in the original minor.

Hence rehomogenizing will just add that variable to the monomials where it was originally present, with the exception of the case where it was present on all monomials, in which case there will be no need to add it, as the dehomogenized polynomial would be homogeneous (of degree 0) for that particular row/column.

All degrees remain 0 or 1 after this process, and now the node incident to the leaf we just rehomogenized corresponds to a row/column with exactly one variable that is still dehomogenized. Thus we can repeat the argument on the entire forest to find that each monomial rehomogenizes to itself divided by the variables that were originally present in all monomials of the minor.
\end{proof}

\begin{remark}
It is important to note that $H(I_P^F)$ is the ideal of {\em all elements} of $I_P^F$ rehomogenized. In general, this is different from the ideal generated by the rehomogenized generators of $I_P^F$. In the package, we rehomogenize the whole ideal by rehomogenizing the generators and saturating the resulting ideal by all the variables we just homogenized by.
\end{remark}

For example, let $V$ be the set of vertices of the triangular prism with spanning forest $Y$ as computed before, and let us compute the rehomogenized ideal $H(I_P^F)$.

{\renewcommand{\baselinestretch}{0.9}
\begin{verbatim}
i25 : HIF = rehomogenizeIdeal(3, Y, F)
\end{verbatim}
}
{\renewcommand{\baselinestretch}{0.9}
\begin{verbatim}
o25 = ideal (x x x x   - x x x x  , x x x x   - x x x x  ,
              4 7 9 10    5 6 8 11   0 3 9 10    1 2 8 11
      x x x x  - x x x x )
       0 3 5 6    1 2 4 7
\end{verbatim}
}
Notice that, in this case the rehomogenized ideal $H(I_P^F)$ equals the slack ideal~$I_P$.

\begin{example} Recall that the generators of $I_P^F$ for the triangular prism were $x_7-1$ and $x_{11}-1$, which rehomogenize to $x_1x_2x_4x_7-x_0x_3x_5x_6$ and $x_1x_2x_8x_{11}-x_0x_3x_9x_{10}$, respectively. However,
\[
\langle x_1x_2x_4x_7-x_0x_3x_5x_6, x_1x_2x_8x_{11}-x_0x_3x_9x_{10} \rangle \neq H(I_P^F).
\]
\end{example}

The relation between the rehomogenized ideal $H(I_P^F)$ and the original slack ideal is given in the following lemma. The proof relies on the key fact that the variety of the rehomogenized ideal is still the same as the slack variety that we started with.

\begin{proposition}
Given a spanning forest $F$ for the non-incidence graph of polytope $P$, the rehomogenization of its scaled slack ideal is an intermediate ideal between the slack ideal and its radical:
$I_P \subseteq H(I_P^F) \subseteq \sqrt{I_P}$. \label{PROP:rehomIdeal}
\end{proposition}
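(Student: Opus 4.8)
The plan is to prove the two containments separately, with the following toolkit. Let $\varphi\colon\kk[\xx]\to\kk[\xx^F]$ be the dehomogenization ring map that sends every variable indexed by an edge of $F$ to $1$ and fixes the rest, and let $H$ denote the rehomogenization operator defined above. Three elementary facts will be used throughout: (i) $\varphi(I_P)\subseteq I_P^F$, because $\varphi$ carries the $(d+2)$-minors of $S_P(\xx)$ to those of $S_P(\xx^F)$ and carries the product of all variables of $S_P(\xx)$ to that of $S_P(\xx^F)$; (ii) $\varphi\circ H$ is the identity on $\kk[\xx^F]$, since reintroducing powers of the $F$-variables and then setting them to $1$ has no effect; and (iii) both $I_P$ and every generator $H(q)$, $q\in I_P^F$, of $H(I_P^F)$ are homogeneous for the $\mathbb Z^v\times\mathbb Z^f$-grading in which $x_{ij}$ has degree $(\mathbf e_i,\mathbf e_j)$ --- for $I_P$ because minors and the saturating monomial are homogeneous, and for $H(q)$ because the rehomogenization makes the degree in every row and every column that is a non-root vertex of $F$ constant across the monomials of $H(q)$, whence the degree at each root is constant too, since the total degree of a monomial equals both the sum of its row-degrees and the sum of its column-degrees.

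For $I_P\subseteq H(I_P^F)$ it suffices, $I_P$ being homogeneous, to show that every homogeneous $p\in I_P$ lies in $H(I_P^F)$. I would prove the analogue of Lemma~\ref{LEM:minorhomog} for an arbitrary homogeneous $p$: the rehomogenization $H(\varphi(p))$ equals $p$ divided by the largest monomial in the $F$-variables that divides $p$. The proof is the one given there, with perfect matchings replaced by integer flows: distinct monomials of $p$ have the same multidegree, so if two of them had the same dehomogenization their exponent vectors would differ by a nonzero integer vector supported on the edges of $F$ with zero net degree at every vertex, i.e.\ a nonzero circulation on a forest, which is impossible; hence the monomials of $\varphi(p)$ are in bijection with those of $p$, and the degree bookkeeping from the proof of Lemma~\ref{LEM:minorhomog} carries over verbatim. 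Consequently $p$ is a monomial multiple of $H(\varphi(p))$, and since $\varphi(p)\in I_P^F$ by (i) we have $H(\varphi(p))\in H(I_P^F)$, so $p\in H(I_P^F)$.

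For $H(I_P^F)\subseteq\sqrt{I_P}$, by the Nullstellensatz (over the algebraic closure) it is enough to check that each generator $H(q)$, $q\in I_P^F$, vanishes on $\VV(I_P)$. Let $s\in\VV(I_P)$ have all its $F$-coordinates nonzero. By \cite[Lemma~5.2]{GMTWsecondpaper} we may rescale the rows and columns of $s$ by nonzero scalars to obtain $s'\in\VV(I_P)$ with $s'_{ij}=1$ for every edge $(i,j)\in F$, in such a way that the restriction $\overline{s'}$ of $s'$ to the non-$F$ coordinates lies in $\VV(I_P^F)$. Using (ii) and $s'_{ij}=1$ on $F$ we obtain $H(q)(s')=q(\overline{s'})=0$; and since $H(q)$ is homogeneous by (iii) while $s'$ is a nonzero rescaling of $s$, this forces $H(q)(s)=0$. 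Hence $H(q)$ vanishes on the part of $\VV(I_P)$ where no $F$-variable vanishes, so $H(q)\cdot\prod_{(i,j)\in F}x_{ij}$ vanishes on all of $\VV(I_P)$ and therefore lies in $\sqrt{I_P}$; say $\bigl(H(q)\prod_{(i,j)\in F}x_{ij}\bigr)^N\in I_P$. Finally, since $I_P$ is saturated with respect to the product of all its variables, $I_P\colon\bigl(\prod_{(i,j)\in F}x_{ij}\bigr)^\infty=I_P$, and so $H(q)^N\in I_P$, i.e.\ $H(q)\in\sqrt{I_P}$.

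The step I expect to be the main obstacle is this last one: upgrading ``$H(q)$ vanishes on the subset of $\VV(I_P)$ where no $F$-variable vanishes'' to ``$H(q)\in\sqrt{I_P}$''. It works precisely because $I_P$, being the saturation of the $(d+2)$-minor ideal by \emph{all} of its variables, has no component contained in the union of the hyperplanes $\{x_{ij}=0\}$ with $(i,j)\in F$, equivalently $I_P\colon\bigl(\prod_{(i,j)\in F}x_{ij}\bigr)^\infty=I_P$. Lesser technical points are verifying fact (iii) --- that $H(q)$ is genuinely multihomogeneous, including the ``balancing'' at the root vertices of $F$ --- and, for the first containment, carrying the monomial-bijection part of Lemma~\ref{LEM:minorhomog} over from minors to all homogeneous polynomials. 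Note also that the two containments together recover the fact that $\VV(H(I_P^F))=\VV(I_P)$.
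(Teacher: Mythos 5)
Your proof has the same global architecture as the paper's: split the chain into $I_P\subseteq H(I_P^F)$ and $H(I_P^F)\subseteq\sqrt{I_P}$, obtain the first from a divisibility relation between a homogeneous $p$ and $H(p^F)$, and obtain the second from the observation that members of $H(I_P^F)$ vanish on the dense open part of the slack variety where the relevant coordinates are nonzero.  Within that frame you diverge on a couple of points worth noting.

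For the first containment, the paper dehomogenizes $\xx^{\aa}p\in J$ to get $p^F\in I_P^F$ and then simply cites Lemma~\ref{LEM:minorhomog} to conclude $p\in H(I_P^F)$.  You correctly notice that this cite is not literal: Lemma~\ref{LEM:minorhomog} is stated only for minors, whereas what is needed is that for an arbitrary multihomogeneous $p$, $H(p^F)$ divides $p$ (with quotient a monomial in the $F$-variables).  Your replacement of the ``perfect matching'' picture by the ``no nonzero circulation on a forest'' picture is exactly the right generalization, and it is a genuine improvement over the terseness of the published argument.  The explicit verification that $H(q)$ is multihomogeneous (including the balancing at the root via total degree equals sum of row degrees equals sum of column degrees) is also a detail the paper waves at but does not spell out; you need it for your second half, and the argument you give is sound.

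For the second containment, both proofs hinge on the same geometric fact, but the way you close the argument is different.  The paper shows the open set $\VV(I_P)\setminus\VV(\langle\xx\rangle)$ is dense in $\VV(I_P)$ (precisely because $I_P$ is the saturation $J:\langle\xx\rangle^\infty$), so that vanishing on a set in between forces vanishing on all of $\VV(I_P)$, and then invokes $I(\VV(I_P))=\sqrt{I_P}$.  You instead multiply by a monomial to extend vanishing to all of $\VV(I_P)$, apply the Nullstellensatz, and then use the saturation property $I_P:m^\infty=I_P$ to strip the monomial off again.  These are two realizations of the same underlying idea; your route is slightly more explicitly algebraic while the paper's is slightly more geometric, and both are valid.

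One small imprecision in your second half, which is also latent in the paper's phrasing: for a point $s\in\VV(I_P)$ with only the $F$-coordinates guaranteed nonzero, the scaled restriction $\overline{s'}$ lies in $\VV(J^F)$, but it need not lie in $\VV(I_P^F)$ if some non-$F$ coordinate of $s$ is zero, since $I_P^F$ is a saturation of $J^F$.  So what you can directly conclude is vanishing on $\VV(I_P)^*$ (all coordinates nonzero), not on the larger set $\VV(I_P)\setminus\VV(\langle\xx^F\rangle)$.  This costs nothing: replace $\prod_{(i,j)\in F}x_{ij}$ by the product of \emph{all} variables in your concluding multiplication, and use $I_P:\langle\xx\rangle^\infty=I_P$, which is literally the definition of $I_P$.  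With that adjustment your proof is correct and, in places, more carefully justified than the paper's own.
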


\begin{proof}
To prove the inclusion $I_P \subseteq H(I_P^F)$, note that $p \in I_P$ happens if and only if $\xx^{\aa} p \in J$ for some exponent vector $\aa$, where $J$ is the ideal generated by all $(d+2)$-minors of the symbolic slack matrix of $P$. Dehomogenizing we get $\xx^{\bb} p^F \in J^F$, which means $p^F$ is in the saturation of $J^F$ by the product of all variables, which is precisely the definition of $I_P^F$. From Lemma~\ref{LEM:minorhomog} it follows that $p\in H(I_P^F)$.

To prove that $H(I_P^F) \subseteq \sqrt{I_P}$, it is enough to show that any polynomial in $H(I_P^F)$ vanishes in the slack variety. By construction, any such polynomial must vanish on the points of the slack variety where the variables corresponding to the forest $F$ are nonzero, $\VV(I_P)\backslash \VV(\langle \xx^F\rangle)$. Thus, they vanish on the Zariski closure of that set. Considering the following containments,
$$\VV(I_P)\backslash\VV(\langle\xx\rangle) \subset \VV(I_P)\backslash \VV(\langle\xx^F\rangle) \subset \VV(I_P),$$
we get that this closure is exactly the slack variety since $\overline{\VV(I_P)\backslash\VV(\langle\xx\rangle)} = \VV(I_P:\left<\xx\right>^\infty) = \mathcal V(I_P)$.
\end{proof}

\begin{remark}
One would like to say that $I_P = H(I_P^F)$, and so far we have no counterexample for this equality, since it always holds if $I_P$ is radical, and we also have no examples of non-radical slack ideals.
\end{remark}

\section{Reduced slack matrices} \label{S.Reduction}

In general, computing the slack ideal may take a long time or be infeasible, especially if the dimension of the polytope is small compared to its number of vertices and facets. In some cases we can speed up this computation combining the slack and the Grassmannian realization space models \cite[Section 6]{GMWthirdpaper}. In fact, we do not need to work with the full slack matrix, since the essential information is contained into a sufficiently large submatrix.

We will see in Examples~\ref{EX:Perles} and~\ref{EX:sphere}, that slack ideals which we were not even able to compute (using personal computers) are now able to be calculated in a matter of a few seconds. To give an estimate of the improvement, computing the slack ideal of the full slack matrix in Example~\ref{EX:Perles} requires the computation of about $8.6 \cdot 10^9$ minors, whereas the reduced slack ideal only requires the computation of about $1.9 \cdot 10^4$ minors.

More precisely, let $P$ be a realizable polytope and $F$ be a set of facets of~$P$ such that $F$ contains a set of facets that can be intersected to form a flag in the face lattice of $P$ and all facets of $P$ not in $F$ are simplicial. We call a \textit{reduced slack matrix} for $P$ the submatrix, $S_F$, of $S_P$ consisting of only the columns indexed by $F$. Set $\VV_F$ to be the nonzero part of the slack variety $\VV(I_F)$.

If $\overline{\VV_F}$ is irreducible, then $\VV_F \times \mathbb C^h \cong \VV(I_P)^*$ are birationally equivalent, where~$h$ denotes the number of facets of $P$ outside $F$ \cite[Proposition 6.9]{GMWthirdpaper}.

\begin{example}
Let $P$ be the Perles projectively unique polytope with no rational realization coming from the point configuration in \cite[Figure 5.5.1, p. 93]{G03}. This is an $8$-polytope with $12$ vertices and $34$ facet and its symbolic slack matrix $S_P(\xx)$ is a $12 \times 34$ matrix with 120 variables.

Let $S_F$ be the following submatrix of $S_P$ whose 13 columns correspond to all the nonsimplicial facets of $P$:
{\renewcommand{\baselinestretch}{0.9}
\begin{verbatim}
i28 : S = specificSlackMatrix("perles1");
-- Checking that the first 13 columns of S indeed contain a flag
i29 : containsFlag(toList(0..12),S)
o29 = true
i30 : SF = reducedSlackMatrix(8, S, FlagIndices=>toList(0..12));
\end{verbatim}
}
The associated symbolic slack matrix is:
\[
S_F(\xx) =
\begin{bmatrix}
0 & 0 & 0 & x_0 & x_1 & x_2 & 0 & 0 & 0 & 0 & 0 & 0 & 0\\
0 & 0 & 0 & x_3 & 0 & 0 & x_4 & x_5 & x_6 & 0 & 0 & 0 & 0\\
0 & 0 & 0 & 0 & 0 & 0 & x_7 & 0 & 0 & x_8 & x_9 & 0 & 0\\
0 & 0 & 0 & 0 & x_{10} & 0 & 0 & 0 & 0 & 0 & 0 & x_{11} & x_{12}\\
0 & 0 & 0 & 0 & 0 & 0 & 0 & x_{13} & 0 & x_{14} & 0 & x_{15} & 0\\
x_{16} & 0 & 0 & 0 & 0 & 0 & 0 & 0 & 0 & 0 & x_{17} & 0 & x_{18}\\
0 & x_{19} & 0 & 0 & 0 & 0 & 0 & 0 & x_{20} & 0 & 0 & 0 & 0\\
0 & 0 & x_{21} & 0 & 0 & x_{22} & 0 & 0 & 0 & 0 & 0 & 0 & 0\\
x_{23} & 0 & 0 & x_{24} & 0 & 0 & 0 & x_{25} & 0 & 0 & 0 & 0 & 0\\
0 & x_{26} & 0 & 0 & x_{27} & 0 & 0 & 0 & 0 & x_{28} & 0 & 0 & 0\\
0 & 0 & x_{29} & 0 & 0 & 0 & x_{30} & 0 & 0 & 0 & 0 & 0 & x_{31}\\
0 & 0 & 0 & 0 & 0 & x_{32} & 0 & 0 & x_{33} & 0 & x_{34} & x_{35} & 0
\end{bmatrix}.
\]

Using \cite[Lemma 5.2]{GMTWsecondpaper}, we first set $x_i=1$ for $i = 0,3,4,5,6,7,8,9,12,14,15,16,\break 17,20,21,25,26,27,28,29,30,31,32,34$. The resulting scaled reduced slack ideal is:
\begin{gather*}
\langle \boldsymbol{x_{35}^2+x_{35}-1}, x_{33}-x_{35}-1, x_{24}-x_{35}, x_{23}-x_{35}, x_{22}-1,x_{19}-x_{35},\\
x_{18}-x_{35},x_{13}-x_{35}-1, x_{11}-x_{35},x_{10}-1,x_{2}-1,x_{1}-x_{35}-1 \rangle.
\end{gather*}
It follows that $x_{35}=\frac{-1 \pm \sqrt{5}}{2}$. Hence, $P$ does not admit rational realizations.
\label{EX:Perles}
\end{example}

\begin{example} \label{EX:sphere}
Let $P$ be the abstract polytope, labeled \#1963 in \cite{CS19}, with 14 vertices labeled $0, \dots, 6, a \dots, g$ and with 94 facets, $\{0,1,2,3,4,5,6\}, \{a,b,c,d,e,f,g\}$ and the other 92 listed in \cite[Table 3]{CS19}.

A reduced slack matrix for $P$ (where facets $F = \{F_0, F_1, F_2, F_3, F_4, F_{10}\}$ form a flag) with the maximum number of variables set to one is the following:
\[
S_F(\xx) = \begin{bmatrix}
0 & 1 & 0 & 0 & 0 & 0 \\
0 & x_{10} & 0 & 0 & 1 & 0 \\
0 & 1 & 0 & x_{18} & 0 & 0 \\
0 & 1 & x_{22} & x_{23} & 0 & 0 \\
0 & x_{33} & x_{34} & 0 & x_{35} & 1 \\
0 & x_{43} & x_{44} & x_{45} & x_{46} & 1 \\
0 & 1 & 0 & 1 & 1 & 1 \\
x_{68} & 0 & x_{69} & 0 & x_{70} & 1 \\
x_{76} & 0 & x_{77} & x_{78} & 0 & 1 \\
x_{85} & 0 & x_{86} & x_{87} & x_{88} & 1 \\
x_{95} & 0 & 0 & 1 & 0 & 0 \\
1 & 0 & 1 & 0 & x_{105} & 1 \\
x_{113} & 0 & x_{114} & x_{115} & x_{116} & 1 \\
x_{127} & 0 & x_{128} & x_{129} & x_{130} & 1
\end{bmatrix}
\]
Now we reconstruct the remaining columns of the slack matrix. We can then recursively determine the sign of each column, by looking for monomial entries and setting the sign of all the entries of that column so they are positive.

From this process, we get a collection of polynomials that must be simultaneously positive. In particular, from this matrix we get polynomials that imply the inequalities $x_{76} > x_{77} > x_{34} > 1$. Furthermore, we have degree 2 polynomials including $-x_{34}x_{76} + x_{34} + x_{76} - x_{77} > 0$. The first inequalities give us
\[
\frac{x_{76}-x_{77}}{x_{76}-1} < 1
\]
while the second gives
\[
\frac{x_{76} - x_{77}}{x_{76}-1} > x_{34}
\]
which is a contradiction to $x_{34}>1$. Thus we have found a subset of the entries of the slack matrix which cannot be simultaneously positive, so that $P$ is not realizable.

\end{example}

The previous example shows that the reduction process can be a powerful tool to show nonrealizability of large quasi-simplicial spheres.

\bigskip
\noindent \textbf{Acknowlegements.} We would like to thank Jo\~{a}o Gouveia for helping us with Section 3.

%
%
 \bibliographystyle{splncs04}
 \bibliography{mybibliography}
\end{document}